\documentclass{amsart}

\usepackage{amsmath,amssymb,amsthm}
\setlength\parindent{0pt}

\usepackage{graphicx,tikz}
\newtheorem{theorem}{Theorem}

\newtheorem{lemma}{Lemma}

\theoremstyle{definition}

\theoremstyle{remark}

\begin{document}

\title[]{Dynamically Defined Sequences \\with small discrepancy}
\keywords{Low discrepancy sequence, energy functional.}
\subjclass[2010]{11L03, 42B05, 82C22.}

\author[]{Stefan Steinerberger}
\address{Department of Mathematics, Yale University, New Haven, CT 06511, USA}
\email{stefan.steinerberger@yale.edu}
\thanks{The author is supported by the NSF (DMS-1763179) and the Alfred P. Sloan Foundation.}

\begin{abstract} We study the problem of constructing sequences $(x_n)_{n=1}^{\infty}$ on $[0,1]$ in such a way that
$$ D_N^* = \sup_{0 \leq x \leq 1} \left| \frac{ \#\left\{1 \leq i \leq N: x_i \leq x \right\}}{N} - x \right|$$
is small. A result of Schmidt shows that for all sequences sequences $(x_n)_{n=1}^{\infty}$ on $[0,1]$ we have $D_N^* \gtrsim (\log{N}) N^{-1}$ for infinitely many $N$, several classical constructions attain this growth. We describe a type of uniformly distributed sequence that seems to be completely novel: given $\left\{x_1, \dots, x_{N-1} \right\}$, we construct $x_N$ in a greedy manner 
$$ x_N = \arg\min_{\min_k |x-x_k| \geq N^{-10}} \sum_{k=1}^{N-1}{1-\log{(2\sin{(\pi |x-x_k|)})}}.$$
We prove that $D_N^* \lesssim (\log{N}) N^{-1/2}$  and conjecture that $D_N^* \lesssim (\log{N}) N^{-1}$. Numerical examples illustrate this conjecture in a very impressive manner. We also establish a discrepancy bound $D_N^* \lesssim (\log{N})^d N^{-1/2}$ for an analogous construction in higher dimensions and conjecture it to be $D_N^* \lesssim (\log{N})^d N^{-1}$.
\end{abstract}

\maketitle

\vspace{-10pt}

\section{Introduction}
\subsection{Introduction.}   Let $(x_n)_{n=1}^{\infty}$ be a sequence on $[0,1]$ and define the star discrepancy of the first $N$ elements via
$$ D_N^* = \sup_{0 \leq x \leq 1} \left| \frac{ \#\left\{1 \leq i \leq N: x_i \leq x \right\}}{N} - x \right|.$$
van der Corput asked in 1935 whether there was a sequence for which $D_N^* \lesssim N^{-1}$. This was disproven by van Aardenne-Ehrenfest \cite{aard}, Roth \cite{roth} showed that for any sequnce
$(x_n)_{n=1}^{\infty}$ in $[0,1]$, we have $D_N^* \gtrsim \sqrt{\log{N}} N^{-1}$ for infinitely many $N$. The sharp result is due to Schmidt \cite{sch} who showed that for any sequence $(x_n)_{n=1}^{\infty}$ in $[0,1]$ there are infinitely many $N$ for which
$$D_N^* \gtrsim \frac{\log{N}}{N}.$$
Oher proofs of Schmidt's result were given by  Bejian \cite{be}, Halasz \cite{ha} and Liardet \cite{li}, the best constant is due to Larcher \& Puchhammer \cite{la}.
Several sequences attaining this growth have been constructed, we refer to the  classical textbooks by Beck \& Chen \cite{bc}, Dick \& Pillichshammer \cite{dick}, Drmota \& Tichy \cite{drmota} and Kuipers \& Niederreiter \cite{kuipers}. 
As soon as one generalizes the problem to sequences in higher dimensions $[0,1]^d$ using the notation $x=(x_1, \dots, x_d)$ and
$$ D_N^* = \sup_{x \in [0,1]^d} \left| \frac{ \#\left\{1 \leq n \leq N \big| \forall~1 \leq i \leq d~: x_{n,i} \leq x_i \right\}}{N} - \prod_{i=1}^{d}{x_i} \right|,$$
 the problem of finding sharp lower bounds on the discrepancy is open. Roth \cite{roth} proved that any sequence $\left\{x_n\right\}_{n=1}^{\infty}$ in $[0,1]^d$ has
$$ D_N^* \gtrsim \frac{(\log{N})^{\frac{d}{2}}}{N} \qquad \mbox{for infinitely many}~N.$$
An improvement by a double logarithmic factor for $d=2$ is due to Beck \cite{beck}. The best known result is due to Bilyk, Lacey, Vagharshakyan \cite{bil3, bil4} and states that for any sequence $(x_n)_{n=1}^{\infty}$ in $[0,1]^d$
$$ D_N^* \gtrsim \frac{(\log{N})^{\frac{d}{2} + \varepsilon_d}}{N} \qquad \mbox{for infinitely many}~N$$
and some $\varepsilon_d > 0$ depending only on $d$. There is no consensus on what the sharp result should be: the two main conjectures (we refer to \cite{bil}) are that for any sequence there are infinitely many $N$ such that
$$ D_N^* \gtrsim \frac{(\log{N})^{\frac{d+1}{2}}}{N} \qquad \mbox{or} \qquad  D_N^* \gtrsim \frac{(\log{N})^{d}}{N}.$$
Of course, both conjectures coincide for $d=1$. 
The first conjecture has the advantage of being structurally aligned with related conjectures in Harmonic Analysis and Probability Theory while the second conjecture has the advantage of being matched by the best known constructions. If the first conjecture were true, this would imply that in $d \geq 2$ dimensions there are sequences more regular than anything we can currently construct. Many of these classical sequences attaining $D_N^* \lesssim (\log{N})^d N^{-1}$ exploit regular structures derived from Number Theory (irrational rotations on the torus, regularity in digit expansions), so one could try to understand whether it is possible to construct
 sequences with small discrepancy using a different viewpoint.

\subsection{Results.} This paper is a companion paper to \cite{stein} where we showed that minimizing a certain functional can decrease the discrepancy of point sets. Here we show that this functional also allows us to construct uniformly distributed sequences in a way that is very different from the usual constructions.
Suppose we are 
given $\left\{x_1, \dots, x_{N-1} \right\} \subset [0,1]$, we construct $x_N$ in a greedy manner 
$$ x_N = \arg\min_{\min_k |x-x_k| \geq N^{-10}} \sum_{k=1}^{N-1}{\left(1-\log{(2\sin{(\pi |x-x_k|)})}\right)}.$$
If the minimizer is not unique, any choice is admissible. The gap condition $\min_k |x-x_k| \geq N^{-10}$ ensures that the new point $x_N$ is not extremely close to any of the existing points. We could replace it by $\min_k |x-x_k| \geq N^{-\ell}$ for any $\ell \in \mathbb{N}$ without it affecting the main result (except for constants).
One can start with any given set $\left\{x_1, \dots, x_m \right\} \subset [0,1]$ and then obtain a sequence in this greedy manner. 
\begin{theorem} We have, for any sequence thus constructed, 
$$ D_N^* \lesssim \frac{\log{N}}{\sqrt{N}},$$
where the implicit constant depends only on the initial set.
\end{theorem}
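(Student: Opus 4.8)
The plan is to move everything to the Fourier side. Using the classical identity $-\log(2\sin\pi t)=\sum_{m\geq 1}m^{-1}\cos(2\pi mt)$ (valid for $t\notin\mathbb{Z}$), the greedy functional becomes
$$ F_{n-1}(x):=\sum_{k=1}^{n-1}\bigl(1-\log(2\sin\pi|x-x_k|)\bigr)=(n-1)+\sum_{m=1}^{\infty}\frac1m\sum_{k=1}^{n-1}\cos\bigl(2\pi m(x-x_k)\bigr). $$
Each summand satisfies $1-\log(2\sin\pi t)\geq 1-\log 2>0$, so $F_{n-1}\geq 0$ on $[0,1]$, while $\int_0^1 F_{n-1}(x)\,dx=n-1$. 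Since the admissible set $A_n=\{x:\min_{k<n}|x-x_k|\geq n^{-10}\}$ has measure $|A_n|\geq 1-2n^{-9}$, the minimum of the nonnegative function $F_{n-1}$ over $A_n$, which $x_n$ attains by construction, is at most its average over $A_n$, so $F_{n-1}(x_n)\leq (n-1)/|A_n|\leq (n-1)+O(n^{-8})$. Writing $\|\cdot\|$ for distance on $\mathbb{R}/\mathbb{Z}$ and setting $E_N:=\sum_{1\leq i<j\leq N}\bigl(-\log(2\sin\pi\|x_i-x_j\|)\bigr)$, a telescoping identity $E_N=E_m^{\mathrm{init}}+\sum_{n>m}\bigl(F_{n-1}(x_n)-(n-1)\bigr)$ together with $\sum_n O(n^{-8})<\infty$ yields $E_N\leq C_0$, a constant depending only on the initial set.

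Next I invoke the Erdős--Turán inequality: for every integer $M\geq 1$,
$$ D_N^*\lesssim \frac1M+\frac1N\sum_{m=1}^M\frac{|S_m(N)|}{m},\qquad S_m(N):=\sum_{n=1}^N e^{2\pi i m x_n}, $$
and Cauchy--Schwarz, $\sum_{m\leq M}m^{-1}|S_m(N)|\leq (1+\log M)^{1/2}\bigl(\sum_{m\leq M}m^{-1}|S_m(N)|^2\bigr)^{1/2}$, so it suffices to bound the weighted exponential--sum energy $\sum_{m\leq M}m^{-1}|S_m(N)|^2$ by $O(N\log N)$ for a suitable $M$. Expanding $|S_m(N)|^2=\sum_{i,j\leq N}\cos(2\pi m(x_i-x_j))$ and exchanging the order of summation,
$$ \sum_{m=1}^M\frac{|S_m(N)|^2}{m}=N\sum_{m=1}^M\frac1m+\sum_{i\neq j}\sum_{m=1}^M\frac{\cos\bigl(2\pi m(x_i-x_j)\bigr)}{m}. $$
The diagonal term is $\leq N(1+\log M)$. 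For the off-diagonal term, write $\sum_{m\leq M}m^{-1}\cos 2\pi mt=-\log(2\sin\pi t)-R_M(t)$; Dirichlet's test together with $|\sin\pi t|\geq 2\|t\|$ gives $|R_M(t)|\leq (2M\|t\|)^{-1}$, whence the off-diagonal term is at most $2E_N+M^{-1}\sum_{i<j}\|x_i-x_j\|^{-1}\leq 2C_0+M^{-1}\sum_{i<j}\|x_i-x_j\|^{-1}$. The gap condition forces $\|x_i-x_j\|\geq \max(i,j)^{-10}$ for $i\neq j$, so $\sum_{i<j\leq N}\|x_i-x_j\|^{-1}\leq \binom N2 N^{10}\leq N^{12}$. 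Choosing $M=N^{12}$ gives $\sum_{m\leq M}m^{-1}|S_m(N)|^2\lesssim N\log N$, hence $\sum_{m\leq M}m^{-1}|S_m(N)|\lesssim (\log N)^{1/2}(N\log N)^{1/2}=\sqrt N\log N$, and the Erdős--Turán inequality delivers $D_N^*\lesssim N^{-12}+N^{-1}\sqrt N\log N\lesssim (\log N)N^{-1/2}$.

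The crux is the third paragraph. The natural energy attached to the logarithmic potential carries Fourier weights $m^{-1}$ rather than the summable $m^{-2}$ of the usual $L^2$-type diaphony, so $\sum_{m\geq 1}m^{-1}|S_m(N)|^2$ genuinely diverges; the structural point that makes the argument go through is that this divergence sits entirely in the diagonal term $N\sum_m m^{-1}$, and after its removal the off-diagonal part is exactly the (bounded) energy $E_N$ up to a truncation tail $R_M$. Controlling that tail only crudely — via the polynomial gap bound and a polynomially large $M$ — is what costs us the square root; a sharper treatment of the tail, or of the $O(n^{-8})$ loss in the averaging step, is presumably what one would need to approach the conjectured bound $(\log N)N^{-1}$. (A minor point to verify is that the gap condition, read on the torus, indeed gives $\|x_i-x_j\|\gtrsim\max(i,j)^{-10}$; the blow-up of the potential at coinciding points makes clustering across the seam $\{0\}\sim\{1\}$ equally disfavoured.)
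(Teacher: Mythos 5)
Your proof is correct, and it takes a genuinely different and simpler route than the paper's.

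The paper's key technical step (Lemma~1) is a heat-flow argument: it smooths $f_0(x)=-\log(2\sin\pi|x|)$ via the Jacobi $\theta$-function, invokes a lemma of Montgomery to force an $L^2$ lower bound on the smoothed sum, and uses a derivative bound plus the mean-zero condition to locate a point where the original sum of logarithms is \emph{strictly negative}, $\lesssim -N^{-2}$, at torus-distance $\gtrsim N^{-4}$ from all existing points. Combined with the truncation estimate (Lemma~2), this yields the pointwise inequality $\sum_{m<n}\sum_{k\leq M}\frac{\cos(2\pi k(x_m-x_n))}{k}\leq 0$ for the greedy choice (Lemma~3), so that after Erd\H{o}s--Tur\'an and Cauchy--Schwarz the entire off-diagonal contribution is discarded as nonpositive. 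You instead observe that the functional $F_{n-1}$ is pointwise nonnegative (since $1-\log(2\sin\pi t)\geq 1-\log 2>0$), so the greedy minimum over a set of measure $\geq 1-2n^{-9}$ is at most the global average $(n-1)$ up to an $O(n^{-8})$ loss; telescoping then gives a \emph{uniform} bound $E_N=O(1)$ on the full pairwise logarithmic energy, with no heat equation, no Montgomery lemma, and no Jacobi $\theta$-function required. Your truncation step plays the role of the paper's Lemma~2 with different (and smaller) exponents. What the paper's approach buys is a pointwise structural statement (each increment is strictly negative, with a route towards $\sim-\log N$ that the author explicitly flags as a possible avenue to the conjecture), and a framework that applies verbatim to Theorem~2 where the inequality $\leq 0$ is imposed by fiat; your aggregate energy bound is weaker in that sense but entirely sufficient for the stated $N^{-1/2}\log N$ bound and considerably more elementary.

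One point you correctly flagged but should make airtight: the gap constraint must be read as torus distance $\|x-x_k\|\geq n^{-10}$, which is what the paper's Lemma~1 also uses (it writes $\|x-x_k\|$). If one reads $|x-x_k|$ as Euclidean distance on $[0,1]$, the constraint does not prevent $x_n$ from approaching an earlier point across the seam, and the blow-up of the potential only yields a gap $\gtrsim 2^{-n}$, which would force $M$ exponential in $N$ and destroy the Cauchy--Schwarz step; so the torus-distance reading is not merely a convenience but is needed. A second, very minor, point: the telescoping requires the initial energy $E_m^{\mathrm{init}}$ to be finite, i.e.\ the initial points to be distinct mod $1$, which is implicit throughout.
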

This bound in itself is not impressive (random points behave in a similar manner) but it is interesting that the outcome of such a greedy algorithm can be controlled at all. However, we believe that a much stronger statement is true: we conjecture that one can ignore the condition $\min_k |x-x_k| \geq N^{-10}$ without fundamentally altering the sequence and that one will (independently of whether one ignores $\min_k |x-x_k| \geq N^{-10}$ or not) obtain a low-discrepancy sequence.
\begin{quote}
\textbf{Conjecture 1.} For any initial set $\left\{x_1, \dots, x_m\right\} \subset [0,1]$, the greedy sequence arising out of
$$ x_N =  \arg\min_{x} \sum_{k=1}^{N-1}{\left(1-\log{(2\sin{(\pi |x-x_k|)})}\right)}$$
satisfies $D_N^* \lesssim (\log{N}) N^{-1}$. A stronger conjecture would be that the implicit constant in $D_N^* \lesssim (\log{N}) N^{-1}$ does not depend on the initial set $\left\{x_1, \dots, x_m\right\}$ as $N \rightarrow \infty$.
\end{quote}

If this statement were true, it would give rise to a large number of low-discrepancy sequences that are constructed by a technique very different from any of the usual ones. One byproduct of our argument is as follows.

\begin{theorem} Suppose we define a sequence in a greedy manner by picking $x_N$ in such a way that
$$ \sum_{n=1}^{N-1} \sum_{k=1}^{N}{ \frac{\cos(2 \pi k (x_N-x_n))}{k}} \leq 0,$$
then
$$ D_N^* \lesssim \frac{\log{N}}{\sqrt{N}}.$$
\end{theorem}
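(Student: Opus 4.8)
The plan is to recast the selection rule for $x_N$ as the monotonicity of an energy, to convert that into an upper bound on a weighted sum of squared exponential sums of the first $N$ points, and then to feed this into the Erd\H{o}s--Tur\'an inequality.

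Write $S_k(m)=\sum_{n=1}^{m}e^{2\pi ikx_n}$ and introduce the running energy
$$ W_N \;=\; \sum_{\substack{1\le n,m\le N\\ n\ne m}}\ \sum_{k=1}^{\max(n,m)}\frac{\cos\!\big(2\pi k(x_n-x_m)\big)}{k}. $$
The hypothesis on $x_N$ is exactly $W_N-W_{N-1}=2\sum_{n=1}^{N-1}\sum_{k=1}^{N}\frac{\cos(2\pi k(x_N-x_n))}{k}\le 0$, so $W_N$ is non-increasing and $W_N\le W_{m_0}=O(1)$, where $m_0$ is the size of the initial set. Grouping the double sum by the frequency $k$ --- for fixed $k$ the pairs carrying the term $\tfrac1k\cos(2\pi k(x_n-x_m))$ are precisely those with $\max(n,m)\ge k$ --- yields the identity
$$ W_N \;=\; \sum_{k=1}^{N}\frac1k\Big(|S_k(N)|^2-|S_k(k-1)|^2-(N-k+1)\Big). $$
Combining this with $W_N\le O(1)$ and $\sum_{k=1}^{N}\frac{N-k+1}{k}\le (N+1)H_N$, $H_N=\sum_{k\le N}1/k$, we obtain the key inequality
$$ \sum_{k=1}^{N}\frac{|S_k(N)|^2}{k}\;\le\;\sum_{k=1}^{N}\frac{|S_k(k-1)|^2}{k}\;+\;(N+1)H_N+O(1). $$

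The main point --- and the step I expect to be the real obstacle --- is to show that the ``defect'' on the right-hand side satisfies $\sum_{k=1}^{N}\frac{|S_k(k-1)|^2}{k}\lesssim N\log N$. Here $S_k(k-1)$ is the frequency-$k$ exponential sum of the initial block $x_1,\dots,x_{k-1}$, a frequency comparable to the number of points, so no cancellation is available for free; the estimate must come from the selection rule itself. The rule forces $x_k$ into the set where $\Phi_k(x):=\sum_{n=1}^{k-1}\sum_{j=1}^{k}\frac{\cos(2\pi j(x-x_n))}{j}$ is non-positive. Since $\sum_{j=1}^{k}\frac{\cos(2\pi jt)}{j}$ is bounded below by an absolute constant and is $\gtrsim\log k$ for $|t|\lesssim 1/k$, the condition $\Phi_k(x_k)\le 0$ limits how many of $x_1,\dots,x_{k-1}$ can cluster within $\asymp 1/k$ of $x_k$; writing
$$ \sum_{k=1}^{N}\frac{|S_k(k-1)|^2}{k}\;=\;\sum_{k=1}^{N}\frac{k-1}{k}\;+\;\sum_{\substack{1\le n,m\le N\\ n\ne m}}\ \sum_{k=\max(n,m)+1}^{N}\frac{\cos(2\pi k(x_n-x_m))}{k} $$
and estimating the inner sum by $\big|\sum_{k=a+1}^{N}\frac{\cos(2\pi k\theta)}{k}\big|\lesssim\min\!\big(\log(N/a),(a\|\theta\|)^{-1}\big)$ (with $\|\theta\|$ the distance from $\theta$ to $\mathbb{Z}$), this separation information should let one control the defect pair by pair. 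When the stronger minimization property is available (as in Theorem 1) one gets the cleaner pointwise estimate $\Phi_k(x_k)=\min_x\Phi_k(x)\le -|S_k(k-1)|/k$, obtained by evaluating $\Phi_k$ at the $k$ points $x=\tfrac mk+\beta$ ($m=0,\dots,k-1$), averaging in $m$, and optimizing over $\beta$; summing these over $k$ is an alternative route to the same bound.

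Granting $\sum_{k=1}^{N}\frac{|S_k(N)|^2}{k}\lesssim N\log N$, the proof concludes with the Erd\H{o}s--Tur\'an inequality and Cauchy--Schwarz,
$$ D_N^*\;\lesssim\;\frac1K+\frac1N\sum_{k=1}^{K}\frac{|S_k(N)|}{k}\;\le\;\frac1K+\frac1N\Big(\sum_{k=1}^{K}\frac1k\Big)^{1/2}\Big(\sum_{k=1}^{K}\frac{|S_k(N)|^2}{k}\Big)^{1/2}, $$
and taking $K=N$ this is $\lesssim \frac1N+\frac{\sqrt{\log N}}{N}\sqrt{N\log N}\asymp\frac{\log N}{\sqrt N}$, the asserted bound. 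The initial configuration enters only through the constant $W_{m_0}=O(1)$, and one checks the argument is unchanged if the cut-off $N$ is replaced by $N^{\ell}$.
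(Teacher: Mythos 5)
Your telescoping energy $W_N$ is a clean way to re-package the Erd\H{o}s--Tur\'an + Cauchy--Schwarz computation, and the algebra is correct: $W_N-W_{N-1}=2\sum_{n<N}\sum_{k\le N}\frac{\cos(2\pi k(x_N-x_n))}{k}\le 0$, and regrouping by $k$ gives $W_N=\sum_{k\le N}\frac{1}{k}\bigl(|S_k(N)|^2-|S_k(k-1)|^2-(N-k+1)\bigr)$, hence the key inequality. The final deduction from $\sum_{k\le N}|S_k(N)|^2/k\lesssim N\log N$ is also fine.

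The problem is exactly the one you flag and then do not close: you need $\sum_{k\le N}|S_k(k-1)|^2/k\lesssim N\log N$, and nothing you write actually proves it. The trivial bound $|S_k(k-1)|\le k-1$ gives $\sim N^2$. Your first sketch (bound $\bigl|\sum_{k>a}\frac{\cos(2\pi k\theta)}{k}\bigr|\lesssim\min(\log(N/a),(a\|\theta\|)^{-1})$ and sum over pairs) runs into the same wall: the crude bound $\frac{1}{a\|\theta\|}\le 1$ gives $\sim N^2$, and beating it requires quantitative well-spacing of $x_1,\dots,x_{n-1}$ at scale $1/n$ around $x_n$, which is precisely the sort of thing the theorem is supposed to deliver --- the induction does not close even if one feeds in $D_n^*\lesssim n^{-1/2}\log n$. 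Your alternative route is also incomplete: the evaluation-averaging argument does give $\min_x\Phi_k(x)\le -|S_k(k-1)|/k$, but (i) it requires the minimization rule, not merely the nonpositivity hypothesis of Theorem 2, and (ii) even granting it, it yields $W_N\le -2\sum_k|S_k(k-1)|/k+O(1)$, and $\sum_k|S_k(k-1)|^2/k-2\sum_k|S_k(k-1)|/k$ is in no way $\lesssim N\log N$ in general (the linear subtraction is far too weak against the quadratic term). So the proposal is not a proof; the ``real obstacle'' is a genuine gap.

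For comparison, the paper does not telescope. Its proof writes the off-diagonal part as $\sum_{n=2}^{N}\sum_{m<n}\sum_{k\le K}\frac{\cos(2\pi k(x_m-x_n))}{k}$ and declares each $n$-slice nonpositive. For Theorem~1 this is justified by Lemma~3, which gives nonpositivity of $\sum_{m<n}\sum_{k\le M}\frac{\cos}{k}$ for \emph{all} $M\ge n^{100}$ (so one may take $M=N^{100}$ uniformly in $n$), and there is no frequency-range mismatch. For Theorem~2 the paper only says ``the same line of reasoning with $N$ instead of $N^{100}$''; the stated hypothesis controls the $k$-sum only up to $n$, so there is the same frequency-range gap you run into --- your $\sum_k|S_k(k-1)|^2/k$ defect is, after reindexing, exactly the missing tail $\sum_{m<n\le N}\sum_{n<k\le N}\frac{\cos}{k}$. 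Your write-up makes that gap explicit and visible, which is useful, but it does not fill it, so as a proof of the Theorem as stated it is incomplete.
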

We note that it is always possible to choose such a $x_N$ since
$$ \int_{0}^{1}{\sum_{n=1}^{N-1} \sum_{k=1}^{N}{ \frac{\cos(2 \pi k (x-x_n))}{k}} dx} = 0.$$
Theorem 2 is not very deep and might be close to optimal; presumably there are various different choices of $x_N$ that are admissible and some of them might not be particularly good for the purpose of constructing low-discrepancy sequences (though, as Theorem 2 states, they cannot be arbitrarily bad either). The emphasis of our paper (as well as the numerical experimentation, see \S 1.3.) is that choosing the minimum may lead to very good behavior.
 In particular, an alternative sequence that may be interesting for further study could be
$$ x_N = \arg\min_{0 \leq x < 1} \sum_{n=1}^{N-1} \sum_{k=1}^{N}{ \frac{\cos(2 \pi k (x-x_n))}{k}}.$$
The even more general case would be 
$$ x_N = \arg\min_{0 \leq x < 1} \sum_{n=1}^{N-1}f(x-x_n) $$
for one-periodic functions $f$. There are two obvious questions: (1) are there certain functions $f$ that are particularly suited for producing regular sequences in this manner (even purely numerical results would be of interest) and (2) what can be proven about them? We emphasize that Pausinger's theorem (see \S 1.3) suggests that there might be large families of functions resulting in sequences with very good distribution properties.

\begin{figure}[h!]
\begin{minipage}[l]{.49\textwidth}
\includegraphics[width = 5.8cm]{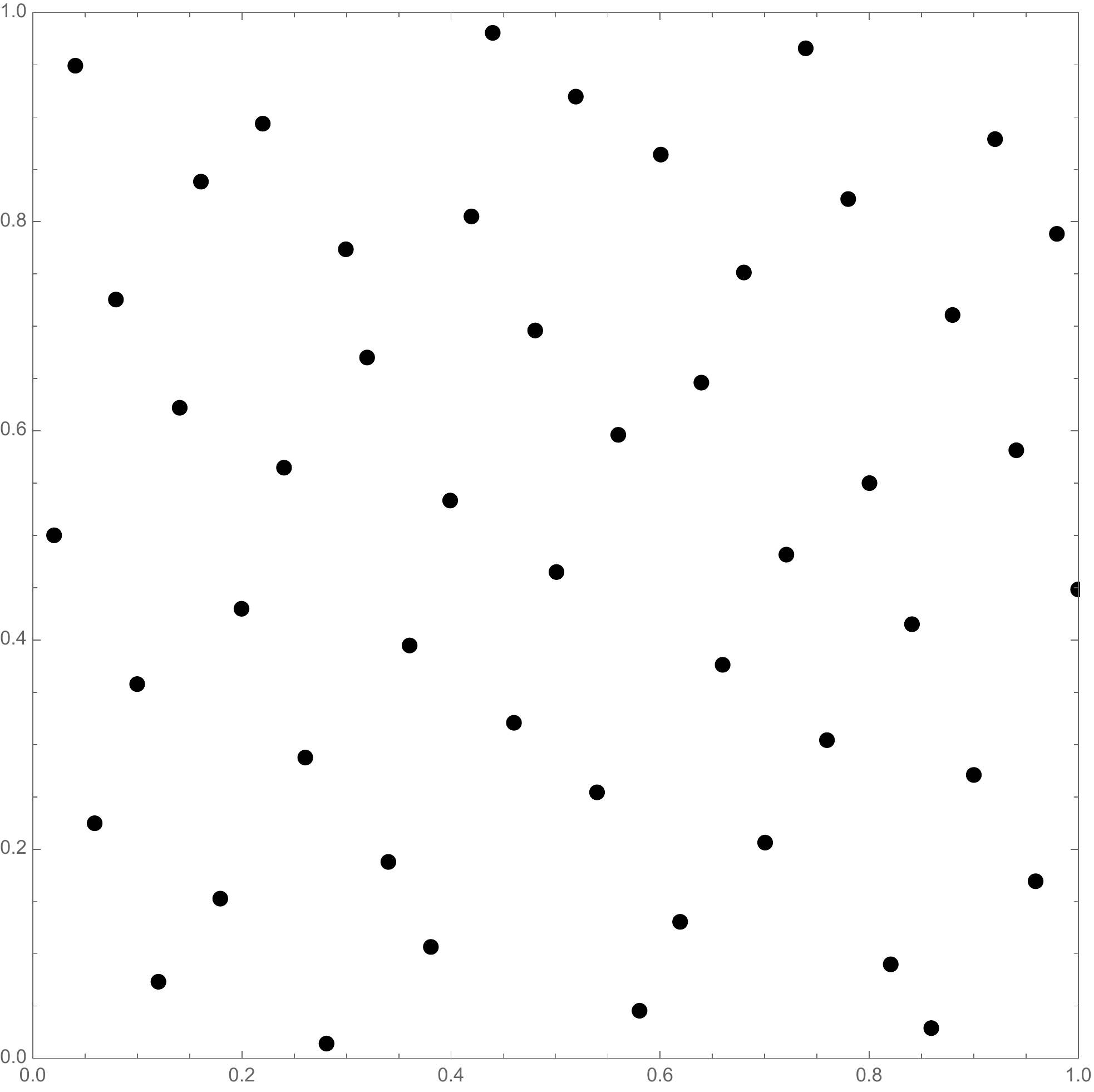} 
\end{minipage} 
\begin{minipage}[r]{.49\textwidth}
\includegraphics[width = 5.8cm]{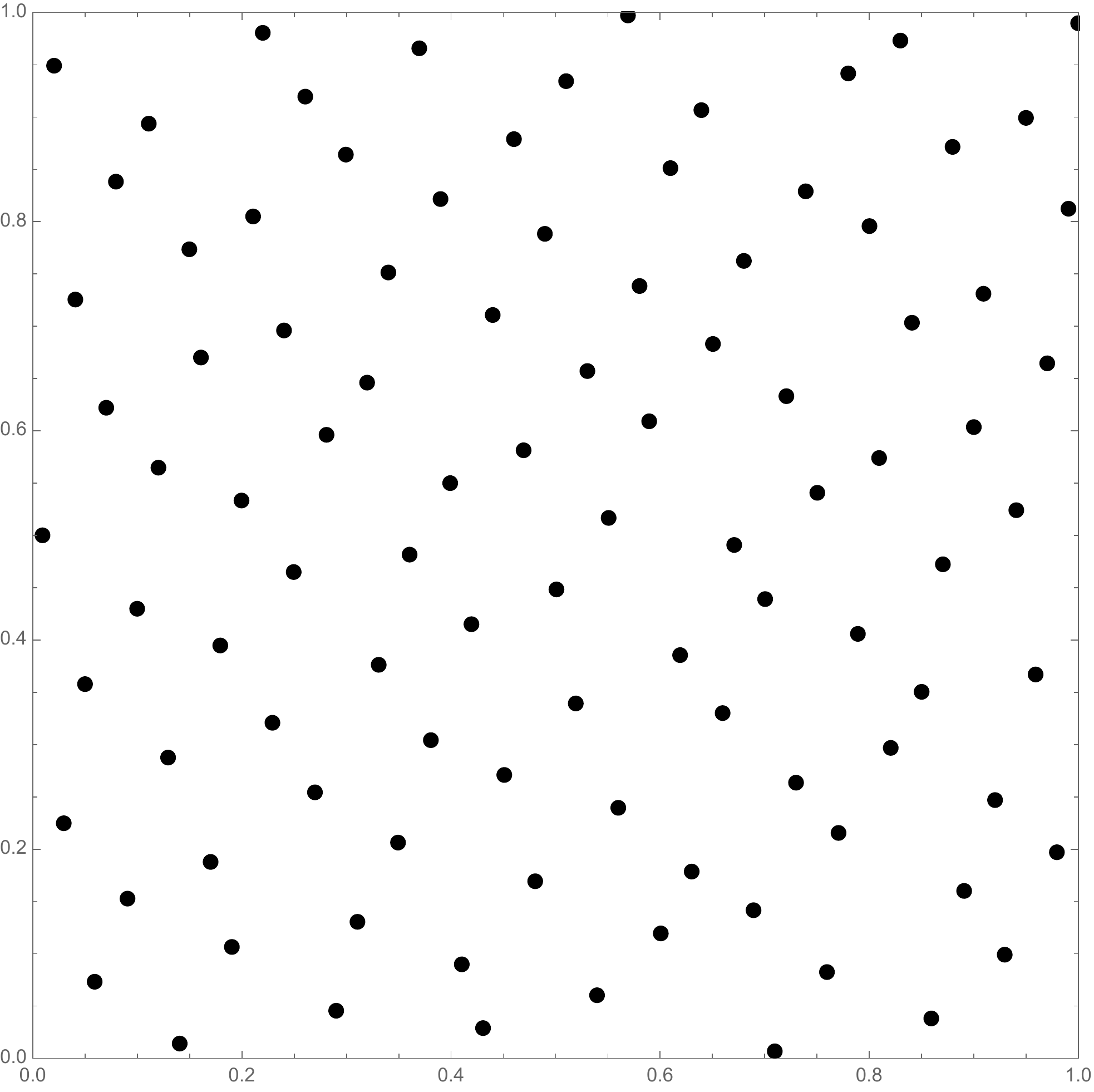} 
\end{minipage} 
\caption{$X_{50}$ (left) and $X_{100}$ (right) when starting with $\left\{0.5, 0.95\right\}$.}
\end{figure}

\subsection{Basic Numerics.} One of the main reasons why Conjecture 1 seems reasonable is that in numerical examples,
the sequence performs extraordinarily well. Given the first $N$ elements of a sequence $(x_n)_{n=1}^{N}$, we will associate to it the
 set
$$ X_N = \left\{ \left(\frac{n}{N}, x_{n}\right): 1\leq n \leq N\right\} \subset [0,1]^2$$
and use the star-discrepancy $D_N^*(X_N)$ as a sign of quality (see Fig. 1).

\begin{table}[h!]
\begin{tabular}{ l | c|  c |  c | c }
  $N$ & $D_N^*(X_N)$ & $D_N^*(\mbox{Halton}_{2,3})$ & $D_N^*(\mbox{Hammersley}_2)$ & $\mbox{Kronecker}_{\sqrt{133}}$ \\[0.05cm]
50 & 0.044 & 0.067 & 0.048  & 0.083  \\[0.05cm]
100 & 0.026 & 0.049 & 0.026  & 0.037 \\[0.05cm]
150 & 0.018  & 0.039 &  0.017 & 0.070  \\[0.05cm]
200 & 0.013   &  0.022 & 0.014   &  0.026   \\[0.05cm]
250 & 0.012   & 0.018  &  0.012  & 0.026    \\[0.05cm]
\end{tabular}
\vspace{3pt}
\caption{Discrepancy $D_N^*(X_N)$ for the sequence arising from $\left\{0.5, 0.95\right\}$ and the value of classical sets of the same size.}
\end{table}
Our sequence is actually comparable (or even superior) in quality to many of the classical constructions (see also \cite{stein}).
We compare (see Table 1) the sequence with the Halton set (using base 2 and 3), the Hammersley sequence (using base 2) and the Kronecker-type set
$$ \mbox{Kronecker}_{\sqrt{133}} = \left\{ \left(\frac{n}{N}, \frac{ \left\{ \sqrt{133} n \right\}}{N}\right): 1 \leq n \leq N\right\}.$$
The choice of $\sqrt{133}$ is more or less at random but was selected to give somewhat nice behavior (except for $N=150$, see Table 1). We observe that our sequence, starting with $\left\{0.5, 0.95\right\}$ (which was also more or less chosen at random), is comparable or superior to the other examples.
\begin{table}[h!]
\begin{tabular}{ l | c|  c |  c | c |  c | c }
  $N$ & 10 & 25 & 50 &100 & 150 & 200 \\[0.05cm]
$D_N^*(X_N)$ & 0.32  & 0.12 & 0.06 & 0.032 & 0.022 & 0.016 \\[0.05cm]
\end{tabular}
\vspace{3pt}
\caption{Discrepancy $D_N^*(X_N)$ for the sequence arising from the initial set $\left\{0.5, 0.51, 0.52, 0.53, 0.54\right\}$.}
\end{table}
This behavior seems quite robust under various initial conditions. We could try to intentionally 'break' the sequence by starting with a particularly bad initial configuration. We observe that the sequence auto-adjusts in a nice way. We illustrate this below for the sequence $(x_n)$ starting with the initial set of points $\left\{0.5, 0.51, 0.52, 0.53, 0.54\right\}$ (see Fig. 2).
 In both cases we see that the newly added points initially avoid the clustered regions and then slowly return to it (though, initially, at a lower density, see Fig. 2). We refer to Table 2 for the behavior of their star discrepancy which is initially quite large (forced by the clustered initial points) and then stabilizes very quickly.

\begin{figure}[h!]
\begin{minipage}[l]{.49\textwidth}
\includegraphics[width = 5.8cm]{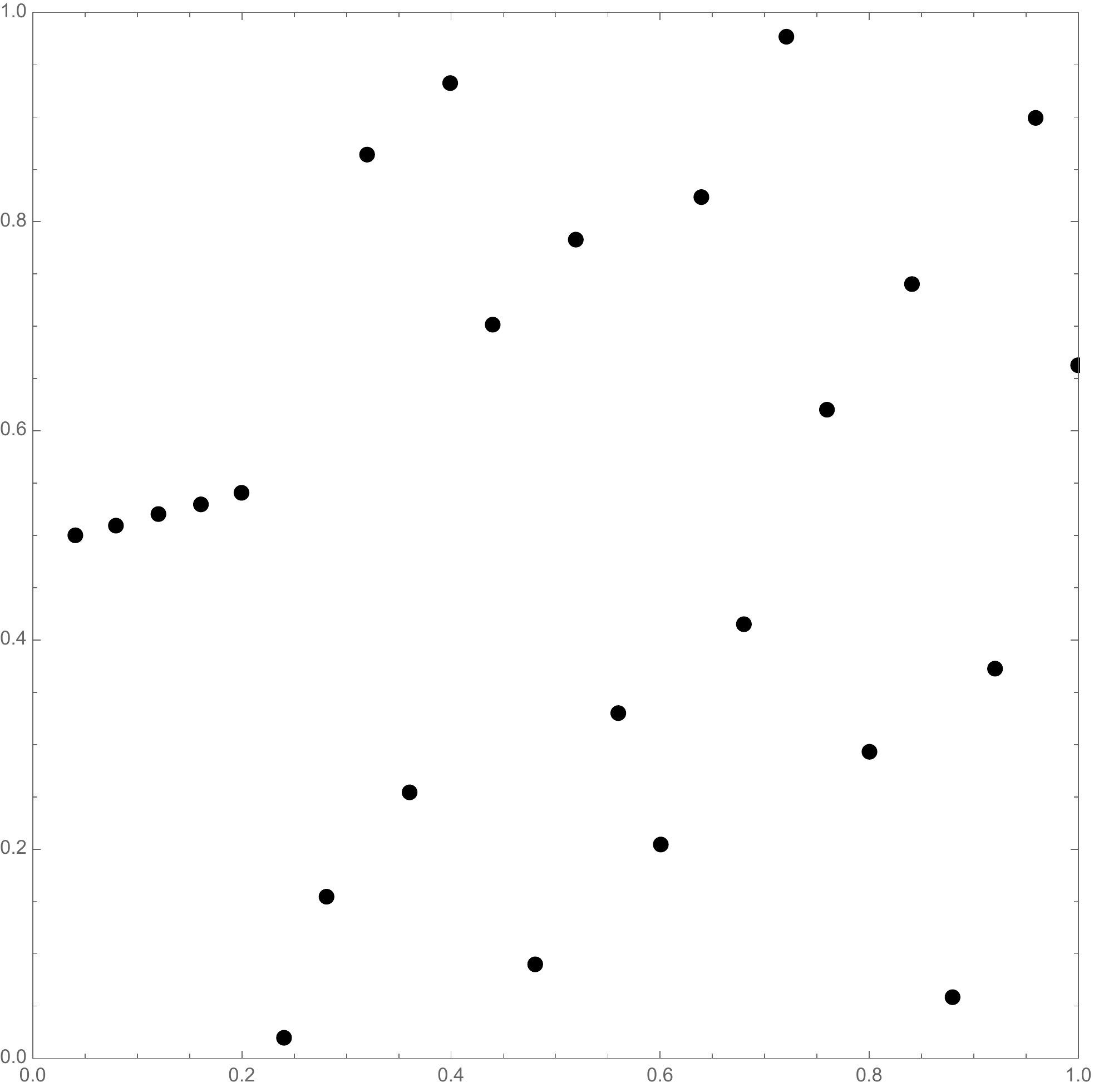} 
\end{minipage} 
\begin{minipage}[r]{.49\textwidth}
\includegraphics[width = 5.8cm]{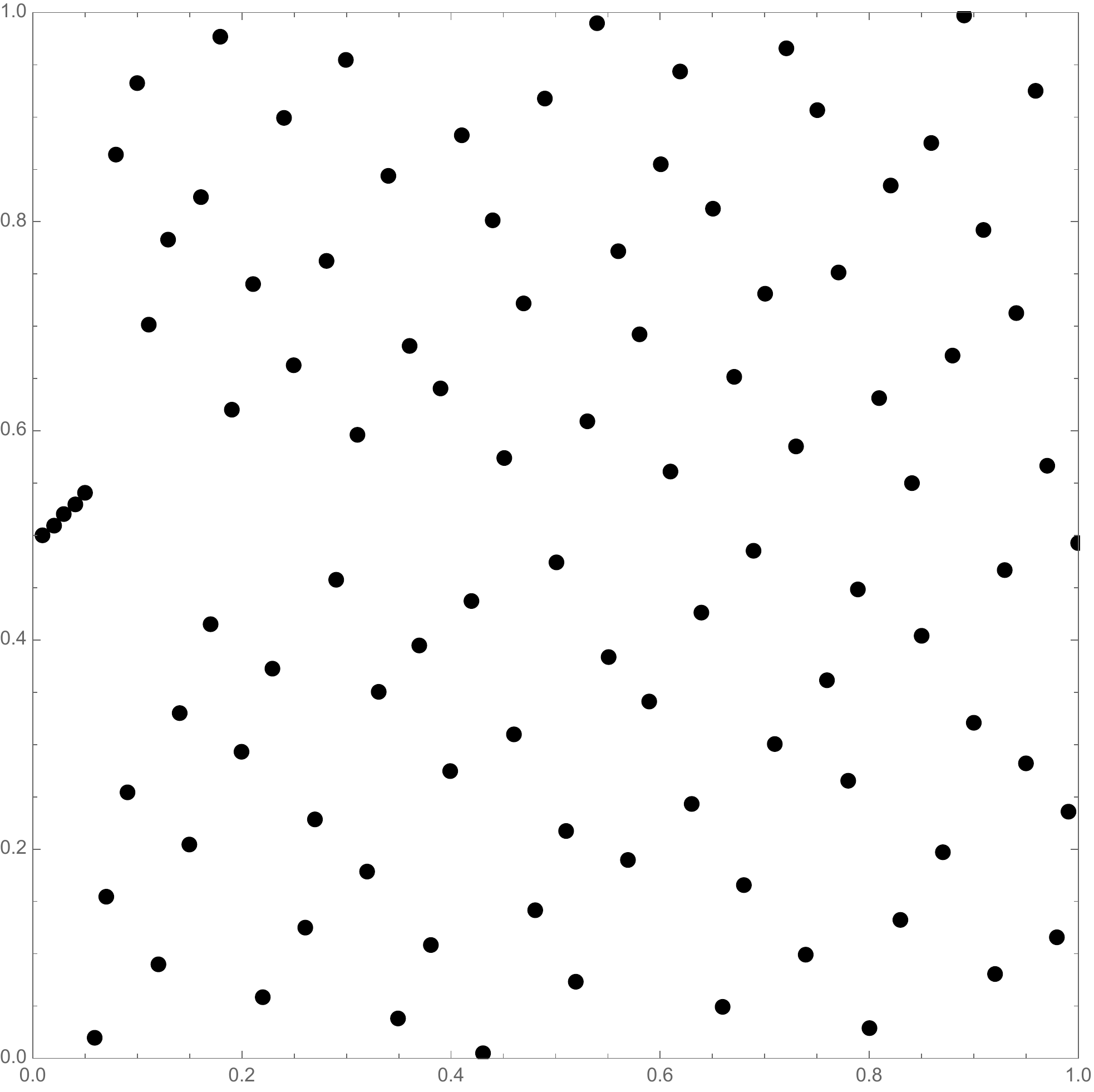} 
\end{minipage} 
\caption{$X_{25}$ (left) and $X_{100}$ (right) when starting with $\left\{0.5, 0.51, 0.52, 0.53, 0.54\right\}$. The sequence initially avoids the crowded region and then slowly returns to it}
\end{figure}

We observed numerically that certain initial conditions may be connected to variants of the van der Corput sequence in base 2. If we start with the set $\left\{0.5, 1\right\}$, then one admissible way of choosing minima leads to the sequence
$$ \frac{1}{2}, \frac{1}{1}, \frac{1}{4}, \frac{3}{4}, \frac{7}{8}, \frac{3}{8}, \frac{1}{8}, \frac{5}{8}, \frac{15}{16}, \frac{7}{16}, \frac{11}{16}, \frac{3}{16}, \frac{13}{16}, \frac{5}{16}, \frac{1}{16}, \dots$$
Can this be proven? Can admissible permutations of the van der Corput sequence, that can arise in this manner, be characterized?\\

Note added in print: this property has since been proven by Florian Pausinger \cite{pausinger} who established the following stronger result: if $f:[0,1] \rightarrow \mathbb{R}$ is symmetric, $f(1/2+x) = f(1/2-x)$, and uniformly convex, then
$$ x_N = \arg\min_x \sum_{k=1}^{N-1}{f(x-x_k)}$$
started with $x_0 = 0$ results in either the van der Corput sequence in base 2 or a permutated van der Corput sequence in base 2 (and these permutations can be precisely understood). Moreover, all these sequences have the property that $D_N^*$ has the same value for all of these sequences (i.e. depending only on $N$, not on the particular permutation). In particular, as we conjecture to be in true in general, the condition $|x-x_k| \geq N^{-10}$ is automatically enforced by the minimization problem.

\subsection{Higher dimensions.}
The construction rule for sequences in $[0,1]^d$ is slightly different: suppose we have constructed $\left\{x_1, \dots, x_{N-1}\right\} \subset [0,1]^d$, then we want the next element $x_N = (x_{N,1}, x_{N,2}, \dots, x_{N,d})$ to satisfy
$$ \sum_{n=1}^{N-1} \prod_{j=1}^{d} \left(1 + \sum_{k=1}^{N}{\frac{\cos{(2 \pi k (x_{N,j} - x_{n,j}))}}{k}}\right) \leq 1.$$
Integrating over $[0,1]^d$ shows that such a $x_N$ always exists.
\begin{theorem} Any such sequence satisfies
$$ D_N^* \lesssim \frac{(\log{N})^d}{\sqrt{N}},$$
where the implicit constant depends only on the initial set.
\end{theorem}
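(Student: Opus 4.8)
The plan is to bound the star discrepancy through the ``energy'' $T_N:=\sum_{n,m=1}^{N}K_N(x_n,x_m)$, where $K_N(x,y)=\prod_{j=1}^{d}\bigl(1+\sum_{k=1}^{N}\tfrac{\cos(2\pi k(x_j-y_j))}{k}\bigr)$ is the kernel in the construction rule. Writing each one--dimensional factor as $1+\sum_{k=1}^{N}\tfrac{\cos 2\pi kt}{k}=\sum_{|k|\le N}c_ke^{2\pi i kt}$ with $c_0=1,\ c_k=\tfrac1{2|k|}$, one gets $K_N(x,y)=\sum_{\|\kappa\|_\infty\le N}w_\kappa e^{2\pi i\kappa\cdot(x-y)}$ with $w_\kappa=\prod_jc_{\kappa_j}\ge0$ and $w_\kappa\asymp_d \overline r(\kappa)^{-1}$, $\overline r(\kappa)=\prod_j\max(1,|\kappa_j|)$; hence $T_N=\sum_{\|\kappa\|_\infty\le N}w_\kappa|S_\kappa|^2$ with $S_\kappa=\sum_{n=1}^{N}e^{2\pi i\kappa\cdot x_n}$, all of whose terms are nonnegative and whose $\kappa=0$ term equals $N^2$. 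Applying the Erd\H os--Tur\'an--Koksma inequality with cut--off $M=N$ and then Cauchy--Schwarz against the weights $w_\kappa$ (the accompanying factor being $\sum_{0<\|\kappa\|_\infty\le N}\overline r(\kappa)^{-2}w_\kappa^{-1}\asymp_d\sum_{0<\|\kappa\|_\infty\le N}\overline r(\kappa)^{-1}\lesssim_d(\log N)^d$), I would obtain
\[
 D_N^*\ \lesssim_d\ \frac1N+\frac{(\log N)^{d/2}}{N}\,\sqrt{\,T_N-N^2\,}\,,
\]
so that it suffices to prove $T_N-N^2\lesssim_d N(\log N)^d$.

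To estimate $T_N$ I would run the construction. At each step $n$ beyond the initial set the definition of $x_n$ forces $\sum_{m<n}K_n(x_n,x_m)\le n-1$, equivalently $\sum_{0<\|\kappa\|_\infty\le n}w_\kappa\,\mathrm{Re}\!\bigl(e^{2\pi i\kappa\cdot x_n}\overline{S_\kappa(n-1)}\bigr)\le0$, where $S_\kappa(\ell)=\sum_{m\le\ell}e^{2\pi i\kappa\cdot x_m}$ and the coefficients are the same $w_\kappa$ because $c_k=\tfrac1{2|k|}$ is independent of the truncation level. Writing $T_N=N(1+H_N)^d+2\sum_{m<n}K_N(x_n,x_m)$ and, for each $n$, splitting the frequency sum of $K_N(x_n,\cdot)$ at $\|\kappa\|_\infty=n$, the low part ($\|\kappa\|_\infty\le n$) is precisely the greedy inequality and is $\le0$, while interchanging the order of summation in the high part ($n<\|\kappa\|_\infty\le N$) turns it into $\tfrac12\sum_{0<\|\kappa\|_\infty\le N}w_\kappa\bigl(|S_\kappa(\|\kappa\|_\infty-1)|^2-(\|\kappa\|_\infty-1)\bigr)$. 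Since $N(1+H_N)^d\lesssim_d N(\log N)^d$, this reduces everything to the single bound
\[
 P_N\ :=\ \sum_{0<\|\kappa\|_\infty\le N}w_\kappa\,\bigl|S_\kappa(\|\kappa\|_\infty-1)\bigr|^2\ \lesssim_d\ N(\log N)^d,
\]
the initial set entering only through a finite additive term $\lesssim_{d,m_0}(\log N)^d$ from the pairs with both indices in the initial set.

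The quantity $P_N$ is exactly the truncation error: it measures the mismatch between the kernel $K_n$ used when $x_n$ is chosen and the kernel $K_N$ appearing in $T_N$. Expanding $|S_\kappa(\|\kappa\|_\infty-1)|^2$ and interchanging sums, $P_N$ equals a diagonal term $\sum_\kappa w_\kappa(\|\kappa\|_\infty-1)\lesssim_d N(\log N)^{d-1}$ plus $2\,\mathrm{Re}\sum_{m'<m\le N-1}\bigl(K_N(x_m-x_{m'})-K_m(x_m-x_{m'})\bigr)$, with $K_\ell(\theta)=\prod_jg_\ell(\theta_j)$, $g_\ell(t)=1+\sum_{k\le\ell}\tfrac{\cos2\pi kt}{k}$. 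Telescoping the product and using $|g_\ell(t)|\le 1+H_\ell$ together with $|g_N(t)-g_m(t)|=\bigl|\sum_{k=m+1}^{N}\tfrac{\cos2\pi kt}{k}\bigr|\le\min\!\bigl(\tfrac{1}{m\|t\|},\log N\bigr)$ (Abel summation and the trivial bound), this part is dominated by $(\log N)^{d-1}\sum_{i=1}^{d}\sum_{m}\tfrac1m\sum_{m'<m}\min\!\bigl(\tfrac1{\|(x_m-x_{m'})_i\|},\,m\log N\bigr)$. For a fixed coordinate $i$ this is the one--dimensional sum underlying Theorems 1 and 2: the greedy inequality, with the other $d-1$ factors bounded below by $\min g_\ell>0$, gives $\sum_{m'<m}g_m((x_m-x_{m'})_i)\lesssim_d m$ with positive summands; the estimate $g_\ell(t)\ge\log\tfrac1{\|t\|}-O(1)$ valid for $\|t\|\ge 1/\ell$ then controls the non--clustered scales, while the gap condition $\min_k|x_m-x_k|\ge m^{-10}$ (so that all consecutive gaps among $x_1,\dots,x_m$ are $\ge m^{-10}$) together with $g_\ell(t)\gtrsim\log\ell$ for $\|t\|\le1/\ell$ controls how many of the first $m-1$ points have $i$--th coordinate within $\approx 1/m$ of $(x_m)_i$. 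I expect this to be the main obstacle of the proof; it is also the only place the exponent $10$ (or any fixed $\ell$) of the gap condition is used, precisely because only logarithmic--scale quantities $\log|x-x_k|^{-1}\lesssim\ell\log N$ ever enter. Carrying it out gives the per--coordinate sum $\lesssim_d N\log N$, hence $P_N\lesssim_d N(\log N)^d$.

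Assembling, $T_N-N^2=\sum_{0<\|\kappa\|_\infty\le N}w_\kappa|S_\kappa|^2\le N(1+H_N)^d+P_N\lesssim_d N(\log N)^d$, and substituting into the displayed inequality gives $D_N^*\lesssim_d \tfrac1N+\tfrac{(\log N)^{d/2}}{N}\sqrt{N(\log N)^d}=\tfrac1N+\tfrac{(\log N)^d}{\sqrt N}\lesssim_d\tfrac{(\log N)^d}{\sqrt N}$, with the implied constant depending only on $d$ and the initial set.
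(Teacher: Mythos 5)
Your high-level framework (Erd\H{o}s--Tur\'an--Koksma, Cauchy--Schwarz against the weights $w_\kappa \asymp r(\kappa)^{-1}$, rewriting $\sum_{0<\|\kappa\|_\infty\le N} w_\kappa|S_\kappa|^2 = T_N - N^2$, and then invoking the greedy inequality to control the off-diagonal part of $T_N$) is exactly the paper's proof. Where you diverge is in your careful treatment of the cutoff: you observe that the greedy condition at step $n$ controls $\sum_{m<n}K_n(x_n,x_m)$ with the kernel truncated at $\|\kappa\|_\infty\le n$, whereas the discrepancy bound at time $N$ requires the same sums with kernel $K_N$. You package the mismatch as $P_N=\sum_{0<\|\kappa\|_\infty\le N}w_\kappa|S_\kappa(\|\kappa\|_\infty-1)|^2$, and your bookkeeping leading to $T_N-N^2 \le N(1+H_N)^d - N + P_N + O_{d,m_0}((\log N)^d)$ is correct. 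The paper, by contrast, simply says ``as in the proof of Theorem~1, we can reorder the sum and then use the fact that all the latter sums over $k$ [are] negative'' -- i.e. it applies the greedy inequality directly with the final cutoff $N$ at every step and does not explicitly address the truncation mismatch that you isolate.

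The genuine gap in your proposal is the bound $P_N\lesssim_d N(\log N)^d$, which is exactly the piece you flag as ``the main obstacle.'' It is not established: you give a sketch that telescopes the product kernel and tries to control one-dimensional Dirichlet-kernel tails via a lower bound $g_\ell(t)\ge\log\tfrac1{\|t\|}-O(1)$ and a separation condition, but this last step is a substantial estimate on the counting of near-coincidences among the points and is nowhere carried out. Worse, the separation condition $\min_k|x_m-x_k|\ge m^{-10}$ you invoke is \emph{not} a hypothesis of Theorem~3: that theorem concerns \emph{any} sequence satisfying the stated trigonometric inequality (it is the $d$-dimensional analogue of Theorem~2, not Theorem~1), so there is no a priori gap condition to appeal to, and the trivial bound $|S_\kappa(\ell)|\le\ell$ gives $P_N$ much larger than $N(\log N)^d$. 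So while you have correctly identified a subtlety that the paper treats casually, your proposed resolution both rests on an unproven bound and silently strengthens the hypotheses, and therefore does not yet constitute a proof.
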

As in the one-dimensional case, we have the following
\begin{quote}
\textbf{Conjecture 2.} The greedy algorithm
$$ x_N =   \arg \min_x  \sum_{\ell=1}^{N-1}{ \prod_{j=1}^{d}{ \left(  1  - \log{(2 \sin{( \pi |x_{n,j} - x_{\ell,j}|)})}\right)}}.$$
leads to a sequence with $D_N^* \lesssim (\log{N})^d N^{-1}$.
\end{quote}

\subsection{Outlook.} We have introduced the sequence
$$ x_N = \arg\min_{\min_k |x-x_k| \geq N^{-10}} \sum_{k=1}^{N-1}{\left(1-\log{(2\sin{(\pi |x-x_k|)})}\right)},$$
shown that it is uniformly distributed and given some indication that it might be a low-discrepancy sequence. However, as also evidenced by the results of Pausinger \cite{pausinger}, there is
no reason to assume that there is anything particularly special about $f(x) = 1-\log{(2\sin{(\pi |x-x_k|)})}$ and similar results might be true at a much greater level of generality for large families of functions $f$. Our particular function $f$
is required to show $D_N^* \lesssim N^{-1/2} \log{N}$ since it has a Fourier-analytic connection to the Erd\H{o}s-Tur\'an inequality, it also has a natural connection to the
fractional Laplacian (we refer to the companion paper \cite{stein} for details). Nonetheless, other functions $f$ may give rise to equally good constructions and, especially in higher dimensions, it is not at all clear what function $f$ could lead to the best results.

\section{Proofs}

\subsection{A Lemma.} We start by proving a regularity statement for minimizers of the sum of logarithms. When we apply it to prove the main results, one of the relevant quantities is inside a logarithm. As a consequence, it is not tremendously important whether we prove Lemma 1 and Lemma 2 with bounds at scale $N^{-5}$ or $N^{-500}$ and thus we have not tried to optimize the arguments. 
However, a much stronger version of Lemma 1, in particular showing that the minimum is for many terms along the sequence actually at scale $ \sim -\log{N}$, could possibly improve the main result.
\begin{lemma} Let $\left\{x_1, \dots, x_N\right\} \subset [0,1]$. Then there exists $0 < x < 1$ such that
$$ \sum_{k=1}^{N}{-\log{(2\sin{(\pi |x-x_k|)})}} \lesssim -\frac{1}{N^2} \quad \mbox{and} \quad \min_{1 \leq k \leq N}{\|x-x_k\|} \gtrsim N^{-4}.$$
\end{lemma}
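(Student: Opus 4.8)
The plan is to pass to the Fourier side. Write
$$ F(x) \;=\; \sum_{k=1}^{N}\Big({-\log\big(2\sin(\pi\|x-x_k\|)\big)}\Big) , $$
and recall the classical expansion $-\log\big(2\sin(\pi\|t\|)\big) = \sum_{m\ge 1} m^{-1}\cos(2\pi m t)$, the series converging in $L^2[0,1]$. This gives at once two facts: $\int_0^1 F\,dx = 0$, and $\widehat{F}(m) = (2m)^{-1}A_m$ for $m\ge 1$, where $A_m := \sum_{k=1}^{N} e^{-2\pi i m x_k}$. The first fact forces $F$ to be negative somewhere; the strength of that negativity is governed by $\|F\|_{L^1}$; and a lower bound on $\|F\|_{L^1}$ will come from the fact that $N$ points cannot be equidistributed at every scale. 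Once $F$ is known to dip below $-cN^{-1/2}$, I will argue that the dip must occur at a point well separated from all the $x_k$, because near any $x_k$ the function $F$ is large and positive rather than negative.

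First I would bound $\|F\|_{L^1}$ from below. Since $\|F\|_{L^1}\ge |\widehat F(m)| = |A_m|/(2m)$ for every $m\ge 1$, it suffices to produce some $m_0\lesssim N$ with $|A_{m_0}|\gtrsim \sqrt N$. For this I would test the configuration against the Fej\'er kernel $\mathcal K$ of degree $2N-1$, which satisfies $\mathcal K\ge 0$, $\int_0^1\mathcal K = 1$, $\mathcal K(0) = 2N$, and $\widehat{\mathcal K}(m) = \big(1-|m|/(2N)\big)_+$, whence
$$ N^2 + 2\sum_{m=1}^{2N-1}\Big(1-\tfrac{m}{2N}\Big)|A_m|^2 \;=\; \sum_{k,\ell=1}^{N}\mathcal K(x_k-x_\ell)\;\ge\;\sum_{k=1}^{N}\mathcal K(0)\;=\;2N^2 , $$
the inequality simply dropping the non-negative off-diagonal terms, and the $N^2$ on the left being the contribution of the zero frequency. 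Hence $\sum_{m=1}^{2N-1}|A_m|^2 \ge N^2/2$, so $|A_{m_0}|^2\ge N/4$ for some $1\le m_0\le 2N-1$, and therefore $\|F\|_{L^1}\gtrsim N^{-1/2}$. I expect this to be the only genuine obstacle: it is essential that the Fej\'er kernel is non-negative, since a naive second-moment estimate for $\sum_m|A_m|^2$ can be spoiled by tight clusters of points.

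Finally I would localize the negativity. From $\int_0^1 F = 0$ we get $\int_0^1 F^-\,dx = \tfrac12\|F\|_{L^1}\gtrsim N^{-1/2}$, with $F^- := \max(-F,0)$. On the other hand $F^-(x)\le \sum_{k=1}^{N}\big(\log(2\sin\pi\|x-x_k\|)\big)^+\le N\log 2$ pointwise, while the bad set $B := \{x\in[0,1]:\min_k\|x-x_k\|<N^{-4}\}$ has measure at most $2N^{-3}$; hence $\int_B F^-\,dx\le 2N^{-2}\log 2$, which is at most half of $\int_0^1 F^-$ once $N$ is large. Thus $\int_{G} F^-\,dx\gtrsim N^{-1/2}$, where $G := [0,1]\setminus B$ is compact and $F$ is continuous on it; and since $\int_G F^-\le |G|\,\max\big(-\min_G F,\,0\big)\le \max\big(-\min_G F,\,0\big)$, we obtain $\min_G F\lesssim -N^{-1/2}\lesssim -N^{-2}$. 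Any minimizing point $x$ (which we may take in $(0,1)$) lies in $G$, so $\min_k\|x-x_k\|\ge N^{-4}$, which proves the lemma; the remaining finitely many small $N$, as well as the precise choice of exponents, are absorbed into the implied constants, in the spirit of the remark preceding the lemma. I would stress one point: it is essential to estimate the negative part $F^-$ rather than $F$ itself, because the mean of $F$ over $G$ may actually be slightly positive — the logarithmic spikes of $F$ at the $x_k$ are integrable but still contribute an amount of order $N^{-2}$ — so what is genuinely available is the total negative mass $\int_0^1 F^-$, not the average of $F$ on $G$.
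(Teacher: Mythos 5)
Your proof is correct, and it takes a genuinely different and in fact cleaner route than the paper's. The paper smooths $F$ by the heat semigroup, invokes the parabolic maximum principle, gets an $L^2$ lower bound for the smoothed function from Montgomery's lemma, converts it to a pointwise bound using a gradient estimate, and finally ``undoes'' the smoothing via the Jacobi theta kernel to localize the minimum away from the $x_k$. You bypass all of the PDE/convolution machinery: from the trivial estimate $\|F\|_{L^1}\ge|\widehat F(m)|=|A_m|/(2m)$ together with a Montgomery-type inequality (which you re-prove in a self-contained way by testing against the Fej\'er kernel of degree $2N$ rather than citing Montgomery directly), you get $\|F\|_{L^1}\gtrsim N^{-1/2}$ immediately; the mean-zero condition then gives $\int F^-\gtrsim N^{-1/2}$; and the crucial observation that $F^-\le N\log 2$ pointwise, combined with $|B|\lesssim N^{-3}$, lets you discard the bad set and conclude $\min_G F\lesssim -N^{-1/2}$ on the compact good set where $F$ is continuous. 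This yields a bound that is quantitatively stronger than the paper's stated $\lesssim -N^{-2}$ (the paper even remarks that improving the gradient estimate could sharpen its constant, but your argument sidesteps the issue entirely). The one thing to be aware of is that the paper deliberately keeps the heat-kernel formulation because it connects the construction to the companion paper's point of view; your argument, being direct, is arguably the ``right'' proof of this lemma in isolation.
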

\begin{proof}[Proof of Lemma 1] We introduce a one-parameter family of functions for $t \geq 0$ via
$$ f_t(x) = \sum_{k \in \mathbb{Z} \atop k \neq 0}{e^{-4 \pi^2 k^2 t} \frac{ e^{2 \pi i kx}}{2|k|}}$$
and note that
$$ f_0(x) = -\log{(2\sin{(\pi |x|)})}.$$
$f_t$ is the solution of the heat equation starting with $f_0$, in particular the maximum principle for parabolic equations is telling us that for any $t > 0$
$$ \min_{x} \sum_{n=1}^{N}{f_t(x-x_n)} \geq \min_x \sum_{n=1}^{N}{f_0(x-x_n)}.$$
Moreover, by construction, for every $0<y<1$
$$ \int_{0}^{1}{f_t(x-y) dx} = \int_{0}^{1}{f_0(x-y)dx} = 0.$$
We now establish a series of bounds on $f_t$. We will work at scale $t \sim N^{-2}$ but this is not important at this point.
We first observe that
\begin{align*}
 \left\| \sum_{n=1}^{N}{f_t(x-x_n)} \right\|_{L^2}^2 &=  \left\| \sum_{n=1}^{N}{\sum_{k \in \mathbb{Z} \atop k \neq 0}{e^{-4 \pi^2 k^2 t} \frac{ e^{2 \pi i k(x-x_n)}}{2|k|}}} \right\|_{L^2}^2 \\
 &= \left\| \sum_{k \in \mathbb{Z} \atop k \neq 0}{ \frac{e^{-4 \pi^2 k^2 t}}{2|k|} e^{2\pi i k x}    \sum_{n=1}^{N}{ e^{-2 \pi i kx_n}}} \right\|_{L^2}^2\\
 &=  \sum_{k \in \mathbb{Z} \atop k \neq 0}{ \frac{e^{-8 \pi^2 k^2 t}}{4k^2} \left|  \sum_{n=1}^{N}{ e^{-2 \pi i kx_n}} \right|^2}.
 \end{align*}
 We now use a basic Lemma of Montgomery \cite{mont1} (we refer to the nice expositions in \cite[\S 5.12]{mont} and Chazelle \cite[Lemma 3.8.]{chazelle} as well as \cite{bilstein} for a recent refinement) ensuring that
 $$  \sum_{|k| \leq 100N \atop k \neq 0}{\left|  \sum_{n=1}^{N}{ e^{-2 \pi i kx_n}} \right|^2} \geq N^2$$
 and obtain
 $$ \left\| \sum_{n=1}^{N}{f_t(x-x_n)} \right\|_{L^2}^2 \gtrsim e^{-80000 \pi^2 N^2 t}.$$
 We next observe that
 $$ \left\| \frac{d}{dx} f_t \right\|_{L^{\infty}} = \left\| \frac{d}{dx}  \sum_{k \in \mathbb{Z} \atop k \neq 0}{e^{-4 \pi^2 k^2 t} \frac{ e^{2 \pi i kx}}{2|k|}} \right\|_{L^{\infty}} \lesssim \sum_{k \in \mathbb{Z} \atop k \neq 0}{ e^{-4 \pi^2 k^2 t}} \lesssim \frac{1}{\sqrt{t}}$$
 and thus
 $$ \left\| \frac{d}{dx} \sum_{n=1}^{N}{f_t(x-x_n)} \right\|_{L^{\infty}} \lesssim \frac{N}{\sqrt{t}}.$$
 Altogether, abbreviating
 $$ g(x) =  \sum_{n=1}^{N}{f_{N^{-2}}(x-x_n)},$$
 we have shown that
 $$ \int_{0}^{1}{g(x) dx} = 0, ~\|g\|_{L^2} \gtrsim 1 \quad \mbox{and} \quad \|g'\|_{L^{\infty}} \lesssim N^2.$$
 We also note that $g$ arises from the forward evolution of the heat equation and is thus smooth.
We will now use this to show that
 $$ \min_{0 < x < 1}{g(x)} \lesssim - \frac{1}{N^2}$$
 which we see as follows: clearly, from $\|g\|_{L^2} \gtrsim 1$ we observe that 
 $$\max_{0 < x < 1}{|g(x)|} \gtrsim 1.$$ If the maximum is attained at a negative value of $g(x)$, we are done. If it is attained at a positive value, then the bound on the derivative implies
 $$ \int_{0}^{1}{g^+(x) dx} \gtrsim \frac{1}{N^2}$$
 which then, with the mean 0 condition, implies 
 $$ \min_{0 < x< 1}{g(x)} \leq \int_0^1 g^{-}(x) dx = - \int_0^1 g^+(x) dx \lesssim - \frac{1}{N^2}.$$
 This implies the existence of a point $0 < x_0 < 1$ such that
 $$  \sum_{n=1}^{N}{f_{N^{-2}}(x_0-x_n)} \lesssim -\frac{1}{N^2}.$$
 This shows that the heat equation applied to $f_0$ yields a small value. We now argue that this means that the original function $f_0$ has to have a small value that is not particularly close to any of the points.
We can write (identifying the unit interval $[0,1]$ with the Torus $\mathbb{T}$)
 $$  \sum_{n=1}^{N}{f_{N^{-2}}(x_0-x_n)} = \int_{0}^{1} {\left(\sum_{n=1}^{N}{f_0(x_0 - y -x_n)} \right)  \theta_{N^{-2}}(y) dy},$$
 where 
 $$\theta_t(x) = 1 + \sum_{k \in \mathbb{Z} \atop k \neq 0}{ e^{-4 \pi^2 k^2 t} e^{2\pi i k x}}$$ 
 is the Jacobi $\theta-$function. The Jacobi $\theta-$function satisfies
 $$ \theta_t(x) \geq 0, ~ \int_{0}^{1}{\theta_t(x) dx} = 1 \quad \mbox{and} \quad \theta_t(x) \lesssim \frac{1}{\sqrt{t}}.$$
 Using the easy estimate
  $$  \sum_{n=1}^{N}{f_0(x -x_n)} \geq - N$$
  and defining
  $$ A = \left\{ x: \min_{1 \leq k \leq N} |x - x_k| \leq N^{-4}\right\} \quad \mbox{and} \quad m = \inf_{x \in A^c} \left(\sum_{n=1}^{N}{f_0(x -x_n)} \right) ,$$
  we can estimate
  \begin{align*}
   \int_{0}^{1} {\left(\sum_{n=1}^{N}{f_0(x_0 - y -x_n)} \right)  \theta_{N^{-2}}(y) dy} &\geq - |A|N + \int_{0}^{1}{ m \theta_{N^{-2}}(x) dx}\\
   &= -|A|N + m \geq - 2N^{-2} + m
   \end{align*}
   which implies $m \lesssim N^{-2}$ as desired.
  \end{proof}
  
  There is a technical step that could be slightly improved. We observe that
  \begin{align*}
   \left\| \frac{d}{dx} \sum_{n=1}^{N}{f_t(x-x_n)} \right\|_{L^{\infty}} &= \left\| \frac{d}{dx} \sum_{k \in \mathbb{Z} \atop k \neq 0}{ \frac{e^{-4 \pi^2 k^2 t}}{2|k|} e^{2\pi i k x}    \sum_{n=1}^{N}{ e^{-2 \pi i kx_n}}}  \right\|_{L^{\infty}}
   \end{align*}
The exponential cutoff localizes the sum essentially at frequency scales $\sim N$ which shows that we can expect the derivative to
be (possibly up to a logarithmic factor) at scale $\lesssim N$ as opposed to $\lesssim N^2$. However, this improvement would
have no further impact on our main result.

\subsection{An Error Bound} The second technical ingredient is straightforward.
\begin{lemma} For all $N^{-4} < x < 1-N^{-4}$ and all $M \geq N^{100}$, we have
$$\left| \sum_{k=N^{100}}^{M}{\frac{\cos{(2\pi k x)}}{k}} \right| \lesssim \frac{1}{N^{96}}.$$
\end{lemma}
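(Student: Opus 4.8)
The plan is to use summation by parts, exploiting that the partial sums of $\cos(2\pi k x)$ are bounded by a quantity of size $\lesssim N^4$ throughout the admissible range of $x$, while the weights $1/k$ start at scale $N^{-100}$ and are summable against a further factor of $k$. First I would record the Dirichlet-type bound: for every integer $n \geq 1$, writing $C(n) = \sum_{k=1}^{n}\cos(2\pi k x)$ and $C(0) = 0$, one has
$$ |C(n)| \leq \left| \sum_{k=1}^{n} e^{2\pi i kx}\right| \leq \frac{2}{|e^{2\pi i x}-1|} = \frac{1}{|\sin(\pi x)|}. $$
The hypothesis $N^{-4} < x < 1-N^{-4}$ forces the distance of $x$ to $\mathbb{Z}$ to be at least $N^{-4}$, and since $|\sin(\pi t)| \geq 2\|t\|$ for $\|t\| \leq 1/2$, we obtain $|\sin(\pi x)| \geq 2 N^{-4}$ and hence $|C(n)| \lesssim N^4$ uniformly in $n$.

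Next, for $a = N^{100}$ and any $M \geq a$, summation by parts (telescoping $\cos(2\pi k x) = C(k) - C(k-1)$ against the weights $1/k$) gives
$$ \sum_{k=a}^{M} \frac{\cos(2\pi k x)}{k} = \frac{C(M)}{M} - \frac{C(a-1)}{a} + \sum_{k=a}^{M-1} C(k)\left(\frac{1}{k}-\frac{1}{k+1}\right). $$
Inserting the uniform bound $|C(k)| \lesssim N^4$ controls the two boundary terms by $\lesssim N^4/N^{100} = N^{-96}$, while the remaining sum is dominated by
$$ N^4 \sum_{k = N^{100}}^{\infty} \frac{1}{k(k+1)} \lesssim N^4 \cdot \frac{1}{N^{100}} = \frac{1}{N^{96}}. $$
Combining the three contributions yields the claimed estimate $\lesssim N^{-96}$, uniformly in $M$.

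There is essentially no obstacle here; the only mildly delicate point is the lower bound $|\sin(\pi x)| \gtrsim N^{-4}$, which is precisely what the restriction $N^{-4} < x < 1-N^{-4}$ is designed to supply, together with the bookkeeping in the telescoping. The exponent $96 = 100 - 4$ simply records the net gain of $N^{-100}$ from the small weights against the loss of $N^4$ from the Dirichlet kernel; replacing the gap condition in Lemma 1 by scale $N^{-\ell}$ would turn it into $\ell - 4$, which is exactly why the precise exponents in Lemmas 1 and 2 are immaterial for the main theorems.
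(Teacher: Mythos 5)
Your proof is correct and takes essentially the same approach as the paper: summation by parts against the weights $1/k$, together with the Dirichlet-kernel bound $|\sum_{k\le n} e^{2\pi i kx}| \lesssim |e^{2\pi i x}-1|^{-1} \lesssim N^4$ under the hypothesis $N^{-4}<x<1-N^{-4}$. The only cosmetic difference is that the paper shifts the index in the partial sums ($g_k = \sum_{\ell=1}^k \cos(2\pi(\ell-1)x)$) and quotes the abstract Abel summation identity, whereas you telescope $\cos(2\pi kx)=C(k)-C(k-1)$ directly; the estimates and the resulting exponent $96 = 100-4$ are identical.
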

\begin{proof} We use summation by parts. Summation by parts states that if $\left\{f_k\right\}, \left\{g_k\right\}$ are two sequences, then
$$ \sum_{k=m}^{n}{ f_k(g_{k+1} - g_k)} = (f_n g_{n+1} - f_m g_m) - \sum_{k=m+1}^{n}{g_k (f_k - f_{k-1})}.$$
We set 
$$ g_k = \sum_{\ell=1}^{k}{\cos{(2\pi (\ell -1) x)}} \qquad \mbox{and} \qquad f_k = \frac{1}{k}.$$
Then
\begin{align*} 
 \left| \sum_{k=N^{100}}^{M}{\frac{\cos{(2\pi k x)}}{k}} \right| \lesssim  \sup_{k}|g_k| \left(\frac{1}{N^{100}} +  \frac{1}{M} + \sum_{k=N^{100}}^{M}{\frac{1}{k^2}}\right) \lesssim  \frac{\sup_{k}|g_k|}{N^{100}}.
 \end{align*}
 It remains to estimate the supremum. We have, using $N^{-4} < x < 1-N^{-4}$,
 $$ g_k =  \sum_{\ell=1}^{k}{\cos{(2\pi (\ell -1) x)}} = \left| \Re \sum_{\ell=1}^{k}{e^{{2\pi i (\ell -1) x}}} \right| \lesssim \frac{1}{|e^{2 \pi i x} - 1|} \lesssim N^{4}.$$ 
\end{proof}

The main consequence of Lemma 1 and Lemma 2 can now be written as follows.

\begin{lemma} Let $\left\{x_1, \dots, x_{N-1}\right\} \subset [0,1]$ be arbitrary and let
$$ x_N = \arg\min_{\min_k |x-x_k| \geq N^{-10}} \sum_{k=1}^{N-1}{ \left(1-\log{(2\sin{(\pi |x-x_k|)})} \right)   }.$$
Then, for any $M \geq N^{100}$,
 $$ \sum_{n=1}^{N-1} \sum_{k=1}^{M}{ \frac{\cos(2 \pi k (x-x_n))}{k}} \leq 0.$$
\end{lemma}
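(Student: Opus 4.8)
The plan is to read the conclusion off from the minimality of $x_N$ together with Lemma 1, after discarding the high-frequency tail by means of Lemma 2. Write $S_M(\theta)=\sum_{k=1}^{M}\frac{\cos(2\pi k\theta)}{k}$ and recall the classical Fourier identity $-\log(2\sin(\pi|\theta|))=\sum_{k=1}^{\infty}\frac{\cos(2\pi k\theta)}{k}$, valid (conditionally) for $\theta\notin\mathbb{Z}$, so that $1-\log(2\sin(\pi|\theta|))=1+\lim_{M\to\infty}S_M(\theta)$. The quantity to be bounded is $\sum_{n=1}^{N-1}S_M(x_N-x_n)$, and the gap condition guarantees $\|x_N-x_n\|\ge N^{-10}$ for every $n$, so every argument $x_N-x_n$ stays away from the integers.

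First I would manufacture a competitor for the minimization problem: apply Lemma 1 to the $N-1$ points $\{x_1,\dots,x_{N-1}\}$ to obtain $y\in(0,1)$ with $\min_n\|y-x_n\|\gtrsim N^{-4}$ (in particular $\ge N^{-10}$, so that $y$ is admissible for the $\arg\min$ defining $x_N$) and $\sum_{n=1}^{N-1}\bigl(-\log(2\sin(\pi|y-x_n|))\bigr)\le -cN^{-2}$ for an absolute constant $c>0$. Minimality of $x_N$ then reads $\sum_{n}\bigl(1-\log(2\sin(\pi|x_N-x_n|))\bigr)\le\sum_{n}\bigl(1-\log(2\sin(\pi|y-x_n|))\bigr)$; cancelling the constant $N-1$ on both sides and invoking the Fourier identity gives
$$ \sum_{n=1}^{N-1}\sum_{k=1}^{\infty}\frac{\cos(2\pi k(x_N-x_n))}{k}\;\le\;\sum_{n=1}^{N-1}\bigl(-\log(2\sin(\pi|y-x_n|))\bigr)\;\le\;-\frac{c}{N^{2}}. $$
Thus the full Fourier series, summed over $n$ and evaluated at $x_N$, is not merely $\le0$ but negative with a margin of order $N^{-2}$.

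It remains to check that truncating at height $M\ge N^{100}$ perturbs this by far less. Re-running the summation-by-parts estimate from the proof of Lemma 2 verbatim, but with $\|x_N-x_n\|\ge N^{-10}$ in place of $N^{-4}$ and the upper limit sent to infinity (legitimate because the bound $\bigl|\sum_{k=M+1}^{M'}\frac{\cos(2\pi k\theta)}{k}\bigr|\lesssim \frac{1}{M\|\theta\|}$ is uniform in $M'$), one obtains $\bigl|\sum_{k>M}\frac{\cos(2\pi k(x_N-x_n))}{k}\bigr|\lesssim N^{10}/M$ for each $n$; summing over $n$ and using $M\ge N^{100}$,
$$ \Bigl|\sum_{n=1}^{N-1}S_M(x_N-x_n)-\sum_{n=1}^{N-1}\sum_{k=1}^{\infty}\frac{\cos(2\pi k(x_N-x_n))}{k}\Bigr|\;\le\;C\,N^{-89}. $$
Combining the two displays, $\sum_{n=1}^{N-1}S_M(x_N-x_n)\le -cN^{-2}+C N^{-89}\le 0$ for every $N\ge2$ (the case $N=1$ being the empty sum), since the $N^{-2}$ term dominates the $N^{-89}$ error for all $N\ge 2$.

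The argument is short, and the only point that needs mild care is bookkeeping around the conditional convergence of $\sum_k\cos(2\pi k\theta)/k$, which I would avoid by working throughout with finite sums and the uniform summation-by-parts bound, exactly as in Lemma 2. There is no real obstacle here: the entire content is that the $N^{-2}$ gap supplied by Lemma 1 swamps the $N^{-89}$ truncation error supplied by Lemma 2.
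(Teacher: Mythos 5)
Your argument is correct and follows exactly the route the paper takes (the paper's own proof of Lemma 3 is a one-line appeal to Lemma 1, Lemma 2, and the Fourier decomposition of $-\log(2\sin(\pi|x|))$, and your write-up simply fills in those steps). The one wrinkle you handle correctly is that Lemma 2 as stated is calibrated to the gap $N^{-4}$, whereas the relevant gap at $x_N$ is only the $N^{-10}$ enforced by the constraint, so the summation-by-parts bound must be re-derived with that weaker separation; this weakens the tail estimate from $N^{-96}$ to $N^{10}/M\lesssim N^{-90}$ per term, which after summing over $n$ is still crushed by the $-cN^{-2}$ margin supplied by Lemma 1. The only caveat worth flagging is that the final inequality $-cN^{-2}+CN^{-89}\le 0$ holds once $N$ is large enough that the implicit constants cooperate (not literally for every $N\ge 2$ as you wrote), but since the eventual application is a discrepancy bound with its own implicit constant, this is harmless.
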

\begin{proof} This follows from Lemma 1, Lemma 2 and the decomposition
$$ -\log{(2 \sin{(\pi |x|)})} = \sum_{k=1}^{\infty}{\frac{\cos{(2 \pi k x)}}{k}} = \sum_{k=1}^{M}{\frac{\cos{(2 \pi k x)}}{k}}  + \sum_{k=M + 1}^{\infty}{\frac{\cos{(2 \pi k x)}}{k}}.$$
\end{proof}

\subsection{Proof of Theorem 1 and Theorem 2}
\begin{proof}
Our derivation is motivated by the Erd\H{o}s-Turan inequality bounding the discrepancy $D_N^*$ of a set $\left\{x_1, \dots, x_N\right\} \subset [0,1]$ by
$$ D_N^* \lesssim \frac{1}{N^{100}} + \sum_{k=1}^{N^{100}}{ \frac{1}{k} \left| \frac{1}{N} \sum_{n=1}^{N}{ e^{2 \pi i k x_n} } \right|}, \qquad (\diamond)$$
where $k$ is arbitrary.
We can bound this from above by Cauchy-Schwarz
\begin{align*} \sum_{k=1}^{N^{100}}{ \frac{1}{k} \left| \frac{1}{N} \sum_{n=1}^{N}{ e^{2 \pi i k x_n} } \right|} &\leq \left(\sum_{k=1}^{N^{100}}{ \frac{1}{k}} \right)^{1/2}\left( \sum_{k=1}^{N^{100}}{  \frac{1}{k} \frac{1}{N^2} \left| \sum_{n=1}^{N}{ e^{2 \pi i k x_n} } \right|^2 }\right)^{1/2} \\
&\lesssim \sqrt{\log{N}} \left(\frac{1}{N^2}  \sum_{k=1}^{N^{100}}{ \frac{1}{k} \left| \sum_{n=1}^{N}{ e^{2 \pi i k x_n} } \right|^2 } \right)^{1/2}.
\end{align*}
 We square the second term and decouple it into diagonal and off-diagonal terms
\begin{align*}
\frac{1}{N^2} \sum_{k=1}^{N^{100}}{\left(\frac{1}{k}  \left| \sum_{n=1}^{N^{}}{ e^{2 \pi i k x_n} } \right|^2 \right)} &= \frac{1}{N^2}\sum_{k=1}^{N^{100}}{\frac{1}{k}   \sum_{n, m=1}^{N}{ e^{2 \pi i k (x_n-x_m)} } } \\
&\lesssim \frac{\log{N}}{N} + \frac{1}{N^2}\sum_{k=1}^{N^{100}}{\frac{1}{k}   \sum_{m,n =1 \atop m \neq n}^{N}{ e^{2 \pi i k (x_n-x_m)} } }
\end{align*}
Summing the points in pairs, we can simplify the double sum over $m$ and $n$ in the above expression to
$$ \sum_{m,n =1 \atop m \neq n}^{N}{ e^{2 \pi i k (x_n-x_m)} } =  2 \sum_{k=1}^{N^{100}}{\frac{1}{k}\sum_{m,n = 1 \atop m < n}^{N}{ \cos{(2 \pi k (x_m - x_n))}}}.$$
Altogether, 
\begin{align*}
\sum_{k=1}^{N^{100}}{\frac{1}{k}   \sum_{m,n =1 \atop m \neq n}^{N}{ e^{2 \pi i k (x_n-x_m)} } } = 2\sum_{m,n = 1 \atop m < n}^{N}{  \sum_{k=1}^{N^{100}}{ \frac{ \cos{(2 \pi k (x_m - x_n))}}{k} }}
 \end{align*}
Altogether, this shows that
\begin{align*}
D_N^* &\lesssim \sqrt{\log{N}} \left( \frac{\log{N}}{N} + \frac{1}{N^2}  \sum_{m,n = 1 \atop m < n}^{N}{  \sum_{k=1}^{N^{100}}{ \frac{ \cos{(2 \pi k (x_m - x_n))}}{k} }}          \right)^{1/2}\\
&= \sqrt{\log{N}} \left(\frac{\log{N}}{N} + \frac{1}{N^2} \sum_{n=2}^{N} \sum_{m=1}^{n-1}   \sum_{k=1}^{N^{100}}{\frac{ \cos{(2 \pi k (x_m - x_n))}}{k} }      \right)^{1/2}
\end{align*}
We now argue that the sum is negative because every sum (w.r.t. to $m$) is negative. Indeed, Lemma 3 implies that the choice, for every $2 \leq n \leq N$,
$$ x_n = \arg\min_{\min_k |x-x_k| \geq n^{-10}}  \sum_{m=1}^{n-1}   - \log{(2 \sin{( \pi |x_m - x|)})}$$
shows that
$$ \sum_{m=1}^{n-1}   \sum_{k=1}^{N^{100}}{\frac{ \cos{(2 \pi k (x_m - x_n))}}{k} } \leq 0.$$
This establishes the desired result. Theorem 2 follows from the same line of reasoning if we start from $(\diamond)$ with $N$ instead of $N^{100}$ and keep all the trigonometric terms (as opposed to using error bounds to move to the logarithm).
\end{proof}

We note that using this particular way of taking a limit to obtain a Fourier series was already hinted at in earlier work of the author \cite{stein1}.

\subsection{Proof of Theorem 3.} 
\begin{proof}
We use the Erd\H{o}s-Turan-Koksma inequality to bound the discrepancy of a set $\left\{x_1, \dots, x_N\right\} \subset [0,1]^d$ by

$$ D_N^* \lesssim_d  \sum_{0 < \|k\|_{\infty} \leq N}{ \frac{1}{r(2k)} \frac{1}{N} \left| \sum_{\ell=1}^{N}{e^{2\pi i \left\langle k, x_{\ell}  \right\rangle}} \right|},$$
where $r:\mathbb{Z}^d \rightarrow \mathbb{N}$ is given by
$$ r(2k) = \prod_{j=1}^{d}{\max\left\{1, 2k_j \right\}}.$$
The Cauchy-Schwarz inequality implies
$$ D_N^* \lesssim  (\log{N})^{\frac{d}{2}} \left(  \sum_{ 0 < \|k\|_{\infty} \leq N}{ \frac{1}{r(2k)} \frac{1}{N^2} \left| \sum_{\ell=1}^{N}{e^{2\pi i \left\langle k, x_{\ell}  \right\rangle}} \right|^2} \right)^{1/2}.$$
We rewrite the sum as
\begin{align*}
 \sum_{ 0 < \|k\|_{\infty} \leq N}{ \frac{1}{r(2k)} \frac{1}{N^2} \left| \sum_{\ell=1}^{N}{e^{2\pi i \left\langle k, x_{\ell}  \right\rangle}} \right|^2} =
 \frac{1}{N^2}  \sum_{ 0 < \|k\|_{\infty} \leq N}{ \frac{1}{r(2k)} \sum_{\ell, m =1}^{N}{ e^{2\pi i \left\langle k, x_{\ell} - x_m \right\rangle}}}.
 \end{align*}
 However, this sum can also be written as (after additionally summing over $k = \textbf{0}$ and then subtracting the arising value $N^2$)
 \begin{align*}
\sum_{ 0 < \|k\|_{\infty} \leq N}{ \frac{1}{r(2k)} \sum_{\ell, m =1}^{N}{ e^{2\pi i \left\langle k, x_{\ell} - x_m \right\rangle}}}
 &=  \sum_{ 0 < \|k\|_{\infty} \leq N}{ \frac{1}{r(2k)}  \sum_{\ell, m =1}^{N}{ \prod_{j=1}^{d} e^{2\pi i k_j (x_{\ell,j} - x_{m,j} )}}}\\
 &= -N^2 + \sum_{m,\ell =1}^{N}{ \prod_{j=1}^{d} \sum_{k=-N}^{N} \frac{1}{r(2k)} e^{2\pi i k_j (x_{\ell,j} - x_{m,j} )}} \\
 &= -N^2 + \sum_{m,\ell =1}^{N}{ \prod_{j=1}^{d} \left(1+ \sum_{k=1}^{N} \frac{\cos{(2\pi  k(x_{\ell,j} - x_{m,j} ))}}{ k}  \right)}.
 \end{align*}
We now separate the diagonal terms and see that
\begin{align*}
 \sum_{m,\ell =1}^{N}{ \prod_{j=1}^{d} \left(1+ \sum_{k=1}^{N} \frac{\cos{(2\pi  k(x_{\ell,j} - x_{m,j} ))}}{ k}  \right)} &\lesssim N (1 + \log{N})^d\\
 &+  \sum_{m,\ell =1 \atop m \neq \ell}^{N}{ \prod_{j=1}^{d} \left(1+ \sum_{k=1}^{N} \frac{\cos{(2\pi  k(x_{\ell,j} - x_{m,j} ))}}{ k}  \right)}.
\end{align*}
As in the proof of Theorem 1, we can reorder the sum and then use the fact that all the latter sums over $k$ negative. This shows that
 \begin{align*}
 D_N^* \lesssim  (\log{N})^{\frac{d}{2}} \left(  \sum_{ 0 < \|k\|_{\infty} \leq N}{ \frac{1}{r(k)} \frac{1}{N^2} \left| \sum_{\ell=1}^{N}{e^{2\pi i \left\langle k, x_{\ell}  \right\rangle}} \right|^2} \right)^{1/2} \lesssim \frac{(\log{N})^d}{\sqrt{N}}.
 \end{align*}
\end{proof}

\textbf{Acknowledgment.} The author is grateful to two anonymous referees whose many suggestions greatly improved the quality of the manuscript.

\end{document}